\documentclass[reqno, 12pt]{amsart}
\usepackage[a4paper, margin=1in]{geometry}
\usepackage{amsthm, amsmath, amsfonts, amssymb, needspace}
\title{Revisiting Positive Exponent power weighted Stein-Weiss inequalities in \(L^1\)}
\author{Wen Qi Zhang}
\address{Mathematical Sciences Institute\\ Australian National University\\Acton ACT 2601, Australia}
\email{WenQi.Zhang@anu.edu.au}
\usepackage{mathdots}
\usepackage{microtype}
\usepackage{xcolor}
\usepackage{verbatim}
\usepackage{enumitem}
\usepackage{thmtools}
\usepackage{tikz}
\usepackage{needspace}

\usetikzlibrary{arrows.meta}

\usepackage{pgf}
\usepackage{pgfplots}
\usepgfplotslibrary{fillbetween}
\usetikzlibrary{intersections}

\newlength\tindent
\theoremstyle{plain}
\newtheorem{thm}{Theorem}[section]
\newtheorem{prop}[thm]{Proposition}
\newtheorem{lemma}[thm]{Lemma}

\newtheoremstyle{boldremark}
{\dimexpr\topsep/2\relax} 
{\dimexpr\topsep/2\relax} 
{}          
{}          
{\bfseries} 
{.}         
{.5em}      
{}          

\theoremstyle{boldremark}

\theoremstyle{definition}
\newtheorem{defn}[thm]{Definition}

\usepackage{graphicx}
\usepackage{hyperref}
\DeclareGraphicsExtensions{.eps}

\usepackage{fancyhdr}
\usepackage{mathtools}
\usepackage{mathrsfs}

\DeclarePairedDelimiter\ceil{\lceil}{\rceil}
\DeclarePairedDelimiter\floor{\lfloor}{\rfloor}

\def\N{{\mathbb N}}

\def\R{{\mathbb R}}

\def\c00{c_{00}}

\DeclarePairedDelimiter\norm{\lVert}{\rVert}

\def\I{{\mathcal I}_{\lambda}}

\begin{document}
	\maketitle
	\begin{abstract}
		We obtain the necessary and sufficient conditions on input functions for a family of positive exponent power weight \(L^1\) Stein-Weiss inequalities. Earlier work by De N\'{a}poli and Picon \cite{NapPic} showed that a certain cocanceling condition was necessary and sufficient in \(L^1\), but it was unknown if this condition remained necessary in the positive exponent power weight setting. This present article identifies weaker necessary conditions for positive exponent power weights, some of which were shown to be sufficient for positive, non-integer exponent power weights in the author's earlier work \cite{mypreprint}.
	\end{abstract}
	\section{Introduction}
	Recently, progress has been made on a family of non-negative exponent power weight Stein-Weiss inequalities in \(L^1\), with the first such example proved by De N\'{a}poli and Picon \cite{NapPic}. 
	These improvements were derived by constraining the input function, and as a result the Stein-Weiss inequality can be shown to hold for all \(f\) in \(L^1\) satisfying a so-called cocanceling condition (see Def. \ref{cocancel_defn} below). Since Stein-Weiss inequalities are known to otherwise fail in \(L^1\), these constraints can be seen as a way of repairing this failure. We contribute to this literature by confirming weaker conditions (not involving cocanceling, or other differential constraints) are both necessary and sufficient for Stein-Weiss inequalities in power weighted \(L^1\) with positive, non-integer exponents. In order to state our result, we first establish our convention for the Riesz potential, \(\I\) and we write (for \(n\geq 2\), \(0<\lambda<n\)),
	\begin{align*}
		\I f(x):=\int_{\R^n}\frac{f(y)}{|x-y|^{n-\lambda}}dy, \qquad f\in\mathcal{S}(\R^n).
	\end{align*}
	If \(\lambda\) is an even integer, \(\I\) is related to fundamental solutions of integer powers of the Laplacian, and this relationship affects our results. Firstly, we state our result in the `generic case' when \(\lambda\) is not an even integer or \(\floor{a}<\lambda\) (here \(\floor{a}\) denotes the floor of \(a\)):
		\begin{thm}\label{newSW}\(\)\\
		Let 
		\(n\geq 2\), \(0<\lambda<n\) and suppose that \(0<a\), \(q\in[1,\frac{n}{n-\lambda})\) and \(b\) are the exponents satisfying \(\frac{n+b}{q}=n-\lambda+a\). Further suppose \(a\notin \N\), then if \(\lambda\) is not an even integer or \(\floor{a}<\lambda\) the inequality
		\begin{equation}
			\left(\int_{\R^n}|x|^b|\mathcal{I}_{\lambda}f(x)|^qdx\right)^{\frac{1}{q}}\lesssim \int_{\R^n}|x|^a|f(x)|dx \label{SW}
		\end{equation}
		holds if and only if \(f\in C^{\infty}_c(\R^n)\) satisfies
		the vanishing moment assumption: For all multi-indices \(|\gamma|\leq \floor{a}\), \mbox{\(\int_{\R^n} x^{\gamma}f(x) \ dx=0\)}. 
	\end{thm}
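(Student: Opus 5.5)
The proof has two directions: sufficiency (vanishing moments imply \eqref{SW}) and necessity (\eqref{SW} for a given \(f\in C^\infty_c\) forces vanishing moments up to order \(\floor{a}\)). Sufficiency for positive non-integer \(a\) was established in the author's earlier work \cite{mypreprint}. I will invoke that result and spend most of the effort on necessity. For completeness, the sufficiency argument I would recall goes as follows. Write \(k=\floor{a}\) and Taylor-expand the kernel \(y\mapsto|x-y|^{\lambda-n}\) about \(y=0\) to order \(k\). The vanishing moments let us subtract the Taylor polynomial, so that
\[
\I f(x)=\int_{\R^n}\Big(|x-y|^{\lambda-n}-\sum_{|\gamma|\le k}\tfrac{(-y)^\gamma}{\gamma!}\partial^\gamma|x|^{\lambda-n}\Big)f(y)\,dy .
\]
Split into the regions \(|y|<|x|/2\), \(|y|>2|x|\) and \(|x|/2\le|y|\le 2|x|\). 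In the first region the remainder is \(O(|y|^{k+1}|x|^{\lambda-n-k-1})\). In the second, each term is bounded by \(|y|^{|\gamma|}|x|^{\lambda-n-|\gamma|}\) or by \(|y|^{\lambda-n}\). The local region is handled by a weighted Riesz potential estimate. Because \(k<a<k+1\), Minkowski's integral inequality gives \(\|\cdot\|_{L^q(|x|^b)}\) of each piece bounded by \(\int|y|^a|f|\). The borderline homogeneity \(\frac{n+b}{q}=n-\lambda+a\) makes the \(x\)-integrals converge at \(0\) and \(\infty\) precisely because \(a\) is non-integer. The local piece uses \(q<\frac{n}{n-\lambda}\).

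For necessity, suppose \(f\in C^\infty_c\) satisfies \eqref{SW} (the right side is finite). I argue by induction on \(j\le k\): suppose all moments of order \(<j\) vanish, but some moment of order \(j\) does not. Then for large \(|x|\) the multipole expansion gives
\[
\I f(x)=\sum_{|\gamma|=j}\frac{(-1)^{j}}{\gamma!}\Big(\int y^\gamma f\Big)\,\partial^\gamma|x|^{\lambda-n}+O(|x|^{\lambda-n-j-1}).
\]
The leading term \(P(x):=\sum_{|\gamma|=j}c_\gamma\partial^\gamma|x|^{\lambda-n}\) is homogeneous of degree \(\lambda-n-j\). The integral \(\int_{|x|>R}|x|^{b}|x|^{q(\lambda-n-j)}dx\) diverges since \(b+q(\lambda-n-j)+n=q(a-j)\ge 0\); this uses \(j\le k<a\). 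Hence, provided \(P\not\equiv0\), the left side of \eqref{SW} is infinite, which is a contradiction. Since \(f\) has compact support, no issue arises at the origin.

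The main obstacle is showing that \(P\not\equiv0\) whenever the moment vector \((c_\gamma)_{|\gamma|=j}\) is nonzero. Equivalently, the constant-coefficient operator \(Q(\partial)=\sum c_\gamma\partial^\gamma\) must not annihilate \(|x|^{\lambda-n}\). On the Fourier side, \(|x|^{\lambda-n}\) corresponds to \(C|\xi|^{-\lambda}\), so \(P=0\) iff \(Q(i\xi)|\xi|^{-\lambda}\) is supported at the origin. Away from \(0\) this function is smooth, so it would have to vanish identically, and hence \(Q\equiv0\); this seems to show nondegeneracy always. That argument would make the hypothesis on \(\lambda\) unnecessary, so I need to locate the subtlety. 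It appears when \(\lambda\) is an even integer and \(j\ge\lambda\). In that case \(|\xi|^{-\lambda}Q(i\xi)\) can be a polynomial (take \(Q=|\xi|^\lambda R\)), so \(\Delta^{\lambda/2}|x|^{\lambda-n}=c\,\delta\) and \(P\) vanishes off the origin. So the real content is that \(Q(i\xi)|\xi|^{-\lambda}\) is a polynomial only if \(|\xi|^{\lambda}\) divides \(Q\). That forces \(\lambda\) to be an even integer with \(\lambda\le j\le\floor{a}\), and the hypothesis excludes exactly this case. I would make this rigorous via homogeneous distributions, checking that \(|\xi|^{-\lambda}Q\) is a non-polynomial homogeneous function otherwise. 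This completes the induction and yields the vanishing moment condition.
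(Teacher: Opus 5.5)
Your proof is correct and is essentially the paper's. Sufficiency is taken from the earlier preprint. Necessity uses a minimal nonvanishing moment of order $j$ and the multipole expansion, whose leading term is homogeneous of degree $\lambda-n-j$ and so fails to lie in $L^q(|x|^b)$ near infinity because $q(a-j)>0$. It then applies the Fourier-side criterion of Lemma \ref{linear_independence}: $Q(\partial)|x|^{\lambda-n}$ vanishing off $0$ forces $Q(\xi)=W(\xi)|\xi|^{\lambda}$, which is possible only for $\lambda$ an even integer with $\lambda\le j$; you should add the parity check under $\xi\mapsto-\xi$ to rule out odd $\lambda$. The paper phrases the divergence step on annuli $A_R$ and compares powers of $R$, whereas you expand only to order $j$ and read off divergence directly, a minor variation. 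Discard your first formulation, though: ``supported at the origin'' is a statement on the $x$-side, and its Fourier counterpart is that $Q(i\xi)|\xi|^{-\lambda}$ is a polynomial, not that it is supported at $0$.
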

	When \(\lambda\) is an even integer, \(\I\) becomes a fundamental solution for \(\Delta^{\frac{\lambda}{2}}\) which possesses additional cancellations. These cancellations are detected when \(\floor{a}\geq \lambda\). Hence the vanishing moment assumption can be weakened to the following:
	\needspace{9\baselineskip}
	\begin{thm}\label{newSW_even}\(\)\\
		Let 
		\(n\geq 2\), \(0<\lambda<n\) and suppose that \(0<a\), \(q\in[1,\frac{n}{n-\lambda})\) and \(b\) are the exponents satisfying \(\frac{n+b}{q}=n-\lambda+a\). Further suppose \(a\notin \N\), if \(\lambda\) is an even integer and \(\floor{a}\geq \lambda\) then the inequality
		\begin{equation*}
			\left(\int_{\R^n}|x|^b|\mathcal{I}_{\lambda}f(x)|^qdx\right)^{\frac{1}{q}}\lesssim \int_{\R^n}|x|^a|f(x)|dx 
		\end{equation*}
		holds if and only if \(f\in C^{\infty}_c(\R^n)\) satisfies the polyharmonic moment assumption: \newline 
		\mbox{\(\int_{\R^n}h(x)f(x)dx=0\)} for all polyharmonic order \(\frac{\lambda}{2}\) (i.e. \(\Delta^{\frac{\lambda}{2}}h=0\)), homogeneous degree \(\ell\leq \floor{a}\) polynomials \(h\).
	\end{thm}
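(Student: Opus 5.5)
We prove the two directions separately. The sufficiency direction reduces to the vanishing-moment case (Theorem \ref{newSW}) after Taylor-expanding the kernel. The necessity direction uses the structure of the Taylor coefficients of \(|x-y|^{\lambda-n}\) in \(y\) when \(\lambda\) is even.

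\textbf{Kernel expansion.} For \(|y|<|x|/2\) we expand
\[
|x-y|^{\lambda-n}=\sum_{\ell\le \floor{a}} P_\ell(x,y)+R_{\floor{a}}(x,y).
\]
Here \(P_\ell(x,y)\) is homogeneous of degree \(\ell\) in \(y\) and of degree \(\lambda-n-\ell\) in \(x\), and \(|R_{\floor{a}}|\lesssim |y|^{\floor{a}+1}|x|^{\lambda-n-\floor{a}-1}\). We use the Gegenbauer structure of \(P_\ell\): writing \(|x|^{\lambda-n}=|x|^{2-n}|x|^{\lambda-2}\), the Kelvin transform and the Fischer decomposition of \(y\mapsto P_\ell(x,y)\) give
\[
P_\ell(x,y)=\sum_{0\le j\le \ell/2} c_{\ell,j}\,|y|^{2j}H_{\ell-2j}(x,y),
\]
with \(H_{\ell-2j}(x,\cdot)\) harmonic. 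Equivalently, \(P_\ell(x,y)=|x|^{\lambda-n-2\ell}\,Q_\ell(x,y)\) with
\[
Q_\ell(x,y)\propto |x|^\ell|y|^\ell\,C^{(n-\lambda)/2}_\ell(\hat x\cdot\hat y).
\]
The first key step is to show that, as a polynomial in \(y\), \(P_\ell(x,\cdot)\) lies in the span of \(\{|y|^{2j}H : j<\lambda/2,\ H \text{ harmonic}\}\), i.e.\ \(\Delta_y^{\lambda/2}P_\ell(x,y)=0\). This holds when \(\lambda\) is even. Indeed, \(\Delta^{\lambda/2}_x|x-y|^{\lambda-n}=c\,\delta_y\) vanishes off the diagonal, and by symmetry \(\Delta^{\lambda/2}_y|x-y|^{\lambda-n}=0\) for \(y\ne x\). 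Applying \(\Delta_y^{\lambda/2}\) termwise to the convergent expansion and matching homogeneity then gives \(\Delta_y^{\lambda/2}P_\ell(x,\cdot)=0\).

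Conversely, we need the span over \(x\) of \(\{P_\ell(x,\cdot)\}\) to be all homogeneous degree-\(\ell\) polyharmonic polynomials of order \(\lambda/2\). This reduces to checking that each coefficient \(c_{\ell,j}\), \(j<\lambda/2\), \(2j\le \ell\), is nonzero. These coefficients come from the Gegenbauer expansion with parameter \((n-\lambda)/2\). We compute them explicitly as ratios of Pochhammer symbols, e.g.\ \(c_{\ell,j}\propto (1-\lambda/2)_j\)-type factors, which vanish exactly when \(j\ge \lambda/2\). We expect this computation to be the main obstacle: it needs a careful Fischer decomposition of \(|x|^{\ell}|y|^{\ell}C^{\mu}_\ell(\hat x\cdot\hat y)\) into zonal harmonics.

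\textbf{Sufficiency.} Assume \(f\in C_c^\infty\) annihilates polyharmonic polynomials of degree \(\le\floor{a}\). By the first key step, \(\int P_\ell(x,y)f(y)\,dy=0\) for all \(x\) and all \(\ell\le \floor{a}\). Then for \(|x|\) large relative to the support of \(f\) we get
\[
|\I f(x)|\lesssim |x|^{\lambda-n-\floor{a}-1}\int|y|^{\floor{a}+1}|f|.
\]
We then run the same argument the paper uses for Theorem \ref{newSW}, which only uses this pointwise consequence of the moment conditions and not the moment conditions themselves. That argument is a split into \(|y|<|x|/2\), \(|y|>2|x|\) and the comparable region, with \(a\notin\N\) giving the two-sided integrability of \(|x|^{b}|x|^{(\lambda-n-\floor{a}-1)q}\)-type weights against \(|y|^{a}\). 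It yields \eqref{SW} with constant independent of the support. The interpolation between \(|y|^{\floor{a}+1}\) and \(|y|^{\floor{a}}\) bounds uses \(\floor{a}<a<\floor{a}+1\).

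\textbf{Necessity.} Suppose \eqref{SW} holds for \(f\), and let \(\ell_0\le \floor{a}\) be minimal with \(\int P_{\ell_0}(x,y)f(y)\,dy\not\equiv 0\). Then \(\I f(x)= F(x)+O(|x|^{\lambda-n-\ell_0-1})\), where \(F\) is a nonzero function homogeneous of degree \(\lambda-n-\ell_0\). Its \(q\)-th power against \(|x|^b\) diverges at infinity, since
\[
b+(\lambda-n-\ell_0)q>-n \iff \ell_0<a,
\]
which holds because \(\ell_0\le\floor{a}<a\). This contradicts the finiteness of the left side of \eqref{SW}, whose right side is finite for \(f\in C_c^\infty\). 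Hence \(f\) annihilates every \(P_\ell(x,\cdot)\), and by the spanning statement from the kernel expansion it annihilates all degree-\(\le\floor{a}\) polyharmonic polynomials of order \(\lambda/2\).
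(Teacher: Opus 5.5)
Your proposal is correct; the one computation you deferred is classical (details below). The skeleton matches the paper's: Taylor expansion of the kernel at infinity, divergence at large scales for necessity, and the Theorem~\ref{newSW} machinery for sufficiency. The two places where the polyharmonic structure enters are handled differently.

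\textbf{Sufficiency.} The paper obtains the vanishing of $\sum_{\ell\le\floor{a}}\int P_\ell(x,y)f(y)\,dy$ on $\R^n\setminus\{0\}$ in two steps. It first converts the moment assumption into the factorization $\sum_{|\gamma|=\ell}\frac{M_\gamma}{\gamma!}\xi^\gamma=W(\xi)|\xi|^{\lambda}$ via Fischer duality (Lemma~\ref{equiv}), and then uses $\Delta^{\lambda/2}|x|^{\lambda-n}=c\,\delta_0$. Your observation that $y\mapsto P_\ell(x,y)$ is itself a polyharmonic polynomial of degree $\ell$ gives this in one line.

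What the earlier argument actually uses is this identity at every $x\neq 0$. After Lemma~\ref{technical_reduction}, it lets you trade the region $|y|<|x|/4$ for $|y|\ge |x|/4$. The large-$|x|$ decay bound you display only covers $|x|$ beyond the support of $f$, and cannot control the intermediate range uniformly in $f$. So state the input as the identity rather than the decay bound.

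\textbf{Necessity.} You take $\ell_0$ minimal among the degrees where the \emph{function} $\int P_\ell(x,y)f(y)\,dy$ is nonzero. This treats all degrees at once and is tidier than the paper's two-stage minimal-moment argument through Lemma~\ref{asymptotic_L^q}.

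The genuine difference is the final algebraic step. The paper passes from $\sum_{|\alpha|=\ell}\frac{M_\alpha}{\alpha!}\partial^\alpha|x|^{\lambda-n}\equiv 0$ off the origin to the polyharmonic condition without special functions. It uses Fourier analysis (a distribution supported at $0$, with $\widehat{|\cdot|^{\lambda-n}}=C|\xi|^{-\lambda}$, Lemma~\ref{linear_independence}) followed by Fischer duality. Your spanning claim instead needs Gegenbauer connection coefficients.

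The computation you flagged as the main obstacle is the classical connection formula. With $\mu=\frac{n-\lambda}{2}$ and $\nu=\frac{n-2}{2}$,
\[
C^{\mu}_{\ell}(t)=\sum_{0\le j\le \ell/2}\frac{(\mu)_{\ell-j}\,(\mu-\nu)_j\,(\nu+\ell-2j)}{j!\,(\nu)_{\ell-j+1}}\,C^{\nu}_{\ell-2j}(t),\qquad \mu-\nu=1-\tfrac{\lambda}{2}.
\]
So your $(1-\lambda/2)_j$ guess is exactly right. The remaining factors are positive, since $n\ge 3$ whenever $\lambda\ge 2$ is even and $\lambda<n$.

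To reduce spanning to the nonvanishing of these coefficients, you also need two standard facts, which you should state:
\begin{enumerate}[label=(\roman*)]
\item For a linear functional $L$, the map $\omega\mapsto L\bigl(|y|^{2j}Z_{\ell-2j}(\omega,\cdot)\bigr)$ is a spherical harmonic of degree $\ell-2j$. Hence different $j$ separate by orthogonality on $S^{n-1}$.
\item The zonal harmonics $Z_k(\omega,\cdot)$, $\omega\in S^{n-1}$, span the degree-$k$ harmonic polynomials.
\end{enumerate}

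\textbf{Comparison.} Your route gives an explicit picture of which Fischer components of the kernel's Taylor coefficients survive, and hence a transparent reason why only $j<\lambda/2$ matters. The paper's route is shorter. In fact your spanning claim is equivalent, by duality, to Lemmas~\ref{linear_independence} and~\ref{equiv} combined, so you can close your ``main obstacle'' without any computation.
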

	For unweighted \(L^1\) (corresponding to the \(a=0\) case in Theorem \ref{newSW} above) P. De N\'{a}poli and T. Picon \cite{NapPic} showed that the cocanceling condition, first introduced by J. Van Schaftingen \cite{cocan} was both necessary and sufficient. This condition is a generalisation of a divergence free condition and in order to state it precisely, we recall the following definition:
	\needspace{4\baselineskip}
	\begin{defn}\label{cocancel_defn}
		Given finite dimensional vector spaces \(E, \ F\), let \(L(D)\) be a homogeneous linear differential operator of order \(m\) on \(\R^n\) from \(E\) to \(F\). The operator \(L(D)\) is cocanceling if 
		\begin{align*}
			\bigcap_{\xi\in\R^n\setminus\{0\}}\ker L(\xi)=\{0_E\}.
		\end{align*}
	\end{defn}
	A simple example of the above definition is the divergence operator \(L(D)=\text{div}\). With this definition in mind,  De N\'{a}poli and Picon showed that if all \(f\) satisfying \(L(D)f=0\) also satisfied Inequality \eqref{SW} for \(a=0\), then \(L(D)\) must be cocanceling. However, it was not known if this condition remained necessary when \(0<a<1\), and our above result confirms that cocanceling can be replaced with a mean zero condition in this setting. Note that when \(0<a<1\) one has \(\floor{a}=0\), and the vanishing moment assumption in Theorem \ref{newSW} reduces to mean zero.\\ 
	\\
	We also highlight that our results extend to vector fields \(f\), taking value in some finite dimensional vector space \(E\). In this setting, we identify \(E\) with Euclidean space and so the Riesz potential acts componentwise on \(f\). We do not need any special properties of \(E\) so the scalar valued proofs translate componentwise.\\
	\\
	For readers interested about power weights with integer exponents, we remark that even stronger cancellations will be necessary. This was already observed by \cite{NapPic} who demonstrated that cocanceling was insufficient when \(a=1\). A weaker positive result was recorded in the author's earlier work \cite{mypreprint} which uses a Green's function/potential for \(D_{\text{sym}}\) in place of \(\I\), and the argument closed because of additional cancellations in this Green's function.
	\section{Necessity of Theorems \ref{newSW} and \ref{newSW_even}}
The following proposition covers necessity:
\needspace{9\baselineskip}
\begin{prop}\label{necessity_mean_0}
	Let \(n\geq 2\), \(0<\lambda<n\). Suppose that \(a>0\), \(q\in[1,\frac{n}{n-\lambda}]\) and \(b\) are parameters satisfying the scaling condition \(\frac{n+b}{q}=n-\lambda+a\). If \(f\in C^{\infty}_c(\R^n)\) satisfies inequality \eqref{SW}. Then, unless \(\lambda\) is an even integer and \(\ceil{a}-1\geq\lambda\), it must be the case that for all multi-indices \(|\gamma|\leq \ceil{a}-1\) one has
	\begin{align*}
		\int_{\R^n}x^{\gamma}f(x)dx
		&=0.
	\end{align*}
	In the exceptional case where \(\lambda\) is an even integer and \(\ceil{a}-1\geq \lambda\) one instead has that for every \(\ell\leq \ceil{a}-1\), there exists a homogeneous polynomial \(W\) of degree \(\ell-\lambda\) such that
	\begin{align}
		\sum_{|\gamma|=\ell}
		\frac{M_{\gamma}}{\gamma!}\xi^{\gamma}&=W(\xi)|\xi|^{\lambda}. \label{factored}
	\end{align}
	Here \(M_{\gamma}=\int_{\R^n}(-x)^{\gamma}f(x)dx\).
\end{prop}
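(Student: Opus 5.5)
Since \(f\in C^{\infty}_c(\R^n)\), we study \(\I f\) at infinity through a Taylor (multipole) expansion of the kernel. Fix \(R\) with \(\operatorname{supp} f\subset B(0,R)\). For \(|x|>2R\), expand \(y\mapsto |x-y|^{-(n-\lambda)}\) around \(y=0\):
\begin{align*}
	\I f(x)=\sum_{\ell=0}^{N}\sum_{|\gamma|=\ell}\frac{M_{\gamma}}{\gamma!}\,\partial^{\gamma}\bigl(|\cdot|^{\lambda-n}\bigr)(x)+O\bigl(|x|^{\lambda-n-N-1}\bigr),
\end{align*}
with \(M_\gamma=\int(-x)^\gamma f\) as in the statement. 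The degree \(\ell\) term is \(P_\ell(\partial)|x|^{\lambda-n}\), where \(P_\ell(\xi)=\sum_{|\gamma|=\ell}\frac{M_\gamma}{\gamma!}\xi^\gamma\). It is homogeneous of degree \(\lambda-n-\ell\). Let \(\ell_0\) be the smallest \(\ell\) for which this term is not identically zero. We will show \(\ell_0\ge\ceil{a}\).

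The integrability computation comes first. If the degree \(\ell_0\) term \(T_{\ell_0}\) is nonzero, it is bounded below by \(c|x|^{\lambda-n-\ell_0}\) on an open cone \(\Gamma\), by continuity and homogeneity. The remainder is smaller by a factor \(|x|^{-1}\). Hence, for \(|x|\) large in \(\Gamma\), \(|\I f(x)|\gtrsim |x|^{\lambda-n-\ell_0}\). The left side of \eqref{SW} is finite (the right side is finite for compactly supported \(f\)). So
\begin{align*}
	\int_{\Gamma\cap\{|x|>R'\}}|x|^{b+q(\lambda-n-\ell_0)}dx<\infty,
\end{align*}
which forces \(b+q(\lambda-n-\ell_0)<-n\), i.e. \(\frac{n+b}{q}<n-\lambda+\ell_0\). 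By the scaling condition this reads \(a<\ell_0\). (Equality \(a=\ell_0\) also fails, being log divergent, so \(a\le\ell_0\) is already excluded.) Consequently \(T_\ell\equiv0\) for every \(\ell\le\ceil{a}-1\). Note that only finiteness of the left side is used, not the constant in the inequality.

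It remains to translate \(T_\ell\equiv 0\) into conditions on moments. Work with Fourier transforms of tempered distributions: \(|x|^{\lambda-n}\) has Fourier transform \(c_{n,\lambda}|\xi|^{-\lambda}\) with \(c_{n,\lambda}\ne0\) for \(0<\lambda<n\). Thus \(T_\ell\equiv0\) on \(\R^n\setminus\{0\}\) means that \(P_\ell(i\xi)|\xi|^{-\lambda}\), a homogeneous distribution of degree \(\ell-\lambda\), has inverse transform supported at the origin. A distribution supported at \(0\) is a finite sum of derivatives of \(\delta\), so its transform is a polynomial. Therefore \(P_\ell(\xi)|\xi|^{-\lambda}\) agrees away from \(0\) with a polynomial, homogeneous of degree \(\ell-\lambda\). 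If \(\lambda\) is not an even integer, or \(\ell<\lambda\), no such nonzero polynomial exists: either the degree is negative or non-integer, or \(|\xi|^{\lambda}\) is not a polynomial and cannot divide \(P_\ell\) to give one, since \(P_\ell(\xi)=W(\xi)|\xi|^\lambda\) on the sphere would force \(|\xi|^{\lambda}\) to be rational in a way incompatible with non-even \(\lambda\). Hence \(P_\ell\equiv0\), i.e. all \(M_\gamma=0\) with \(|\gamma|=\ell\). This gives the vanishing moments whenever \(\ell\) avoids the exceptional range, and in particular in the non-exceptional case for all \(\ell\le\ceil{a}-1\). If \(\lambda\) is even and \(\ell\ge\lambda\), we obtain exactly \eqref{factored}, \(P_\ell(\xi)=W(\xi)|\xi|^\lambda\) with \(W\) a homogeneous polynomial of degree \(\ell-\lambda\). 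For \(\ell<\lambda\) in the exceptional case, \(W=0\) is the only possibility, which is consistent with \eqref{factored}.

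The main obstacle is this last step: making the claim \(P(\partial)|x|^{\lambda-n}=0\) away from the origin \(\Rightarrow\) \(P(\xi)|\xi|^{-\lambda}\) is a polynomial rigorous. One must also treat the non-even \(\lambda\) case carefully when \(\ell-\lambda\) happens to be a non-negative integer (\(\lambda\) odd or non-integer). The cleanest route is to note that \(P(\xi)/|\xi|^{\lambda}=W(\xi)\) polynomial implies \(|\xi|^\lambda=P/W\) is rational. Restricting to a line \(\xi=te_1+e_2\) then makes \((1+t^2)^{\lambda/2}\) a rational function of \(t\), which holds only for \(\lambda\) even. The lower bound on a cone in the integrability step is routine by comparison.
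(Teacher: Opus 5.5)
Your proof is correct. It uses the same ingredients as the paper: the multipole expansion of $|x-y|^{\lambda-n}$ at infinity, homogeneity of $P_\ell(\partial)|x|^{\lambda-n}$, and the support-at-the-origin/Fourier argument, which is exactly Lemma~\ref{linear_independence}. You organize these ingredients differently, in two useful ways.

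First, you induct on the first term $T_\ell$ that is nonzero \emph{as a function} on $\R^n\setminus\{0\}$, rather than on the first nonzero moment. Factored terms $W(\partial)\Delta^{\lambda/2}|x|^{\lambda-n}$ vanish away from the origin, so they are skipped automatically. One pass therefore gives $T_\ell\equiv0$ for all $\ell\le\ceil{a}-1$, and the Fourier lemma is then applied degree by degree. This removes the paper's second stage, the minimal-$k$ argument with $R(D)\delta_0$.

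Second, you use finiteness of the full tail integral over a cone, instead of the uniform dyadic-annulus bounds of Lemmas~\ref{Error_term_lemma}--\ref{asymptotic_L^q}. This gives the strict inequality $\ell_0>a$, so for $a\in\N$ your argument also kills the order-$a$ terms, whereas the paper only reaches $|\gamma|<a$. In exchange, the paper's annular argument needs only $\sup_R\norm{\I f}_{L^q(|x|^b,A_R)}<\infty$. It also bounds the Taylor remainder by $\int|y|^a|f|$ rather than by constants depending on $\operatorname{supp}f$.

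Two small repairs are needed.
\begin{enumerate}
\item To rule out odd $\lambda$, the paper's parity test is quicker than your rationality argument: $\xi\mapsto-\xi$ shows $\ell$ and $\ell-\lambda$ must have the same parity unless $P_\ell=0$. If you keep your argument, first rotate so that $W(e_2)\neq0$, since for $n\ge3$ the line $te_1+e_2$ may lie in the zero set of $W$. Then compare the multiplicity of the root $t=i$ on both sides of the polynomial identity $(1+t^2)^{\lambda}W^2=P^2$ along that line. For non-integer $\lambda$ nothing needs care, since $\ell-\lambda\notin\mathbb{Z}$.
\item The sentence ``$a\le\ell_0$ is already excluded'' should read ``$\ell_0\le a$ is already excluded''.
\end{enumerate}
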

In the above proposition we do not distinguish between integer and non-integer values of the parameter \(a\), which is why we use \(\ceil{a}-1\) instead of \(\floor{a}\) which is what appears in Theorems \ref{newSW} and \ref{newSW_even}. When \(a\notin\N\) these are equivalent, but our necessity proof also works for \(a\in\N\) if \(|\gamma|<a\), i.e. \(|\gamma|\leq \ceil{a}-1\). We stated Theorems \ref{newSW} and \ref{newSW_even} in terms of \(\floor{a}\) to remain consistent with our previous work \cite{mypreprint}, and because we do not yet have sufficient conditions when \(a>0\) and \(a\in\N\).\\
\\
It may not be obvious that Condition \eqref{factored} in Proposition \ref{necessity_mean_0} is the same as the polyharmonic moment condition in Theorem \ref{newSW_even}. We clarify this as a separate lemma below:\\
\begin{lemma}\label{equiv}
	Suppose that \(\lambda\) is an even integer and \(\ell\geq \lambda\). Then the following are equivalent:
	\begin{enumerate}[label=(\roman*)]
		\item\label{one} The order \(\ell\) moments of \(f\) satisfy the equation
		\begin{align*}
			\sum_{|\gamma|=\ell}\frac{M_{\gamma}}{\gamma!}\xi^{\gamma}&=W(\xi)|\xi|^{\lambda}
		\end{align*}
		for some homogeneous degree \(\ell-\lambda\) polynomial \(W\). Here \(M_{\gamma}=\int_{\R^n}(-x)^{\gamma}f(x)dx\).\\
		\item\label{two} \(\int_{\R^n}h(x)f(x)dx=0\) for all polyharmonic order \(\frac{\lambda}{2}\), homogeneous degree \(\ell\) \mbox{polynomials \(h\)}.\\ 
		\item\label{three} When viewed as linear operators on homogeneous degree \(\ell\) polynomials \begin{align*}
		\ker(\Delta^{\frac{\lambda}{2}})&\subseteq\ker\left(\sum_{|\gamma|=\ell}\frac{M_{\gamma}}{\gamma!}\partial^{\gamma}\right).
		\end{align*}
	\end{enumerate}
\end{lemma}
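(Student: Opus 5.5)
The plan is to use the Fischer inner product on the space \(\mathcal{P}_\ell\) of homogeneous degree \(\ell\) polynomials on \(\R^n\) (complex coefficients if needed):
\begin{align*}
\langle p, r\rangle := p(\partial)\overline{r}\,\big|_{x=0}.
\end{align*}
This is an inner product with \(\langle x^\gamma, x^\beta\rangle=\gamma!\,\delta_{\gamma\beta}\). Under it, multiplication by \(|x|^{\lambda}\) from \(\mathcal{P}_{\ell-\lambda}\) to \(\mathcal{P}_\ell\) is adjoint to \(\Delta^{\frac{\lambda}{2}}:\mathcal{P}_\ell\to\mathcal{P}_{\ell-\lambda}\). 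This is immediate from the definition, since \(\langle |x|^\lambda w, r\rangle = w(\partial)\Delta^{\frac{\lambda}{2}}\overline r|_0=\langle w,\Delta^{\frac{\lambda}{2}} r\rangle\) (real coefficients of \(\Delta\)). Consequently
\begin{align*}
\mathcal{P}_\ell=|x|^{\lambda}\mathcal{P}_{\ell-\lambda}\oplus \ker(\Delta^{\frac{\lambda}{2}}|_{\mathcal{P}_\ell})
\end{align*}
is an orthogonal decomposition. This is the generalized Fischer decomposition, and it holds because the range of an operator is the orthogonal complement of the kernel of its adjoint. I take \(\lambda\) even so that \(|x|^\lambda\) is a polynomial, and I need \(\ell\geq\lambda\).

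Set \(P(\xi):=\sum_{|\gamma|=\ell}\frac{M_\gamma}{\gamma!}\xi^\gamma\in\mathcal{P}_\ell\). The key identity I would check next is
\begin{align*}
P(\partial)h = \sum_{|\gamma|=\ell}\frac{M_\gamma}{\gamma!}\partial^\gamma h
\end{align*}
for \(h\in\mathcal{P}_\ell\). Since \(h\) has degree \(\ell\), \(P(\partial)h\) is a constant, so the operator in \ref{three} maps \(\mathcal{P}_\ell\) to constants. Writing \(h=\sum_{|\beta|=\ell}c_\beta x^\beta\) gives \(\partial^\gamma h=\gamma!\,c_\gamma\), hence
\begin{align*}
P(\partial)h=\sum_{|\gamma|=\ell} M_\gamma c_\gamma=\int_{\R^n} h(-x)f(x)\,dx=(-1)^\ell\int_{\R^n} h(x)f(x)\,dx.
\end{align*}
This settles \ref{two}\(\iff\)\ref{three} directly. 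Condition \ref{three} says \(P(\partial)h=0\) for every \(h\in\ker\Delta^{\frac{\lambda}{2}}\cap\mathcal{P}_\ell\), and by the display this is exactly the vanishing of \(\int hf\) for every such \(h\).

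For \ref{one}\(\iff\)\ref{three}, I would apply the same computation to \(\bar h\). This gives \(P(\partial)\bar h=\langle P,h\rangle\), and \(\ker\Delta^{\frac{\lambda}{2}}\) is closed under conjugation. So \ref{three} is equivalent to \(P\perp\ker(\Delta^{\frac{\lambda}{2}}|_{\mathcal{P}_\ell})\). By the orthogonal decomposition, this holds if and only if \(P\in|x|^\lambda\mathcal{P}_{\ell-\lambda}\), i.e. \(P(\xi)=W(\xi)|\xi|^\lambda\) with \(W\) homogeneous of degree \(\ell-\lambda\), which is \ref{one}. If \(f\) is real-valued I could work over \(\R\) throughout and avoid conjugates. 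For vector-valued \(f\) the argument applies componentwise.

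The main point needing care is the decomposition, specifically that \(\langle\cdot,\cdot\rangle\) is positive definite and that the adjoint computation is correct. Both reduce to the monomial orthogonality \(\langle x^\gamma,x^\beta\rangle=\gamma!\,\delta_{\gamma\beta}\), which follows from \(\partial^\gamma x^\beta|_0=\gamma!\,\delta_{\gamma\beta}\) for \(|\gamma|=|\beta|\). The remaining steps are bookkeeping: the sign \((-1)^\ell\) and the factor \(\gamma!\) both cancel harmlessly.
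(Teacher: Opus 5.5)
Your proof is correct and follows essentially the same route as the paper: the identity \(P(D)h=(-1)^{\ell}\int_{\R^n}h(x)f(x)\,dx\), the Fischer inner product under which multiplication by \(|x|^{\lambda}\) is adjoint to \(\Delta^{\frac{\lambda}{2}}\), and the fact that the orthogonal complement of \(\ker(\Delta^{\frac{\lambda}{2}})\) is \(|x|^{\lambda}\mathcal{P}_{\ell-\lambda}\). The differences are minor: the paper obtains (i)\(\Rightarrow\)(ii) directly from \(P(D)h=W(D)\Delta^{\frac{\lambda}{2}}h=0\) instead of from the decomposition, and your conjugated inner product also covers complex-valued \(f\).
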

\begin{proof}
	To simplify notation, we let \(P(\xi)=\sum_{|\gamma|=\ell}\frac{M_{\gamma}}{\gamma!}\xi^{\gamma}\), and for a homogeneous degree \(\ell\) polynomial \(h\) we write \(h(x)=\sum_{|\alpha|=\ell}h_{\alpha}x^{\alpha}\) where \(\{h_{\alpha}\}_{|\alpha|=\ell}\) are constants. The following calculation verifies that \(P(D)h=(-1)^{\ell}\int_{\R^n}h(x)f(x)dx\) for all homogeneous degree \(\ell\) polynomials, and is useful throughout:
	\begin{align*}
		P(D)h&=\sum_{|\gamma|=\ell}\sum_{|\alpha|=\ell}\frac{M_{\gamma}}{\gamma!}h_{\alpha}\partial^{\gamma}x^{\alpha}\\
		&=\sum_{|\gamma|=\ell}M_{\gamma}h_{\gamma}.
	\end{align*}
	The last line follows by computing \(\partial^{\gamma}x^{\alpha}\). Exchanging the integral in \(M_{\gamma}\) and the sum provides
	\begin{align*}
		\sum_{|\gamma|=\ell}M_{\gamma}h_{\gamma}&=\int_{\R^n}\sum_{|\gamma|=\ell}h_{\gamma}(-x)^{\gamma}f(x)dx\\
		&=(-1)^{\ell}\int_{\R^n}h(x)f(x)dx.
	\end{align*}
	We start with \ref{one} implies \ref{two}, let \(h\) be polyharmonic of order \(\frac{\lambda}{2}\), we have from \ref{one} that 
	\begin{align*}
		P(D)h&=W(D)\Delta^{\frac{\lambda}{2}}h\\
		&=0.
	\end{align*}
	Combining this with our earlier identity \mbox{\(P(D)h=(-1)^{\ell}\int_{\R^n}h(x)f(x)dx\)} gives the desired conclusion.\\
	\\
	Next, \ref{two} implies \ref{three} also follows from the identity \mbox{\(P(D)h=(-1)^{\ell}\int_{\R^n}h(x)f(x)dx\)}. Indeed, if \(h\) is polyharmonic order \(\frac{\lambda}{2}\), homogeneous degree \(\ell\) then \ref{two} states that \(\int_{\R^n}h(x)f(x)dx=0\), and so our identity gives \(P(D)h=0\).\\
	\\
	Finally, in order to demonstrate \ref{three} implies \ref{one}, we use the Fischer inner product which is defined as 
	\begin{align*}
		\langle p,q\rangle_F&:=p(D)q(0)
	\end{align*}
	where \(p,q\) are homogeneous degree \(\ell\) polynomials. A calculation shows
	\begin{align*}
		\langle |x|^{\lambda}w(x), q\rangle_F&=\langle w(x), \Delta^{\frac{\lambda}{2}}q\rangle_F
	\end{align*}
	and so, viewing \(\Delta^{\frac{\lambda}{2}}\) as a linear operator on homogeneous degree \(\ell\) polynomials, we have that the adjoint is multiplication by \(|x|^{\lambda}\). Since \ref{three} was \(\ker(\Delta^{\frac{\lambda}{2}})\subseteq \ker(P(D))\), one has for all \(h\in\ker(\Delta^{\frac{\lambda}{2}})\), \(P(D)h=0\). This implies
	\begin{align*}
		\langle P, h \rangle_F&=0
	\end{align*} 
	and hence \(P\) is orthogonal to \(\ker(\Delta^{\frac{\lambda}{2}})\). Noting that the orthogonal complement of \(\ker(\Delta^{\frac{\lambda}{2}})\) is the image of multiplication by \(|x|^{\lambda}\) we conclude \(P(x)=|x|^{\lambda}W(x)\) for some homogeneous degree \(\ell-\lambda\) polynomial \(W\), as desired.\\
\end{proof}
Before we discuss the proof of Proposition \ref{necessity_mean_0} (this will be later in Section \ref{proof_section_necessity}), we first collect three helpful lemmas. 
\section{Three Useful Lemmas}
We find the following auxiliary lemmas convenient. The first lemma quantifies the behaviour of the kernel \(|\cdot|^{\lambda-n}\) in terms of its Taylor expansion around \(0\). This idea already appears in the work of Bousquet and Van Schaftingen \cite{can}, and De N\'{a}poli and Picon \cite{NapPic} but we include it here for convenience: 
\needspace{4\baselineskip}
\begin{lemma}\label{Error_term_lemma}
	Suppose that \(\lambda<n\), \(a>0\), \(q\in[1,\frac{n}{n-\lambda}]\) and \(b\) are parameters satisfying the scaling condition \(\frac{n+b}{q}=n-\lambda+a\). Let \(R>0\), and suppose that \(y\in B(0,\frac{R}{2})\) and \(x\in B(0,2R)\setminus B(0,R)\). Then for all \(a>0\), set \(m:=\ceil{a}-1\) one has the estimate
	\begin{align}
		\left||x-y|^{\lambda-n}-\sum_{|\alpha|\leq m}(-y)^{\alpha}\frac{\partial^{\alpha}|x|^{\lambda-n}}{\alpha!}\right|&\lesssim |y|^aR^{\lambda-n-a} 
		\label{error_estimate}
	\end{align}
\end{lemma}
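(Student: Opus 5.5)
We argue by Taylor's theorem with a remainder of order \(m+1=\ceil{a}\). The comparison between \(|y|^{m+1}R^{\lambda-n-m-1}\) and \(|y|^aR^{\lambda-n-a}\) then comes from \(|y|\le R/2\) and \(m+1\ge a\). Throughout, set \(K(z):=|z|^{\lambda-n}\), which is smooth and homogeneous of degree \(\lambda-n\) on \(\R^n\setminus\{0\}\).

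First, for fixed \(x\) with \(R\le|x|<2R\) and \(|y|<R/2\), the segment \(\{x-ty: t\in[0,1]\}\) stays in the annulus \(\{R/2\le |z|\le 5R/2\}\), since \(|x-ty|\ge |x|-|y|\ge R/2\) and \(|x-ty|\le |x|+|y|\le 5R/2\). So \(g(t):=K(x-ty)\) is smooth on \([0,1]\), and Taylor's formula with Lagrange (or integral) remainder at order \(m\) gives
\begin{align*}
K(x-y)-\sum_{|\alpha|\le m}\frac{(-y)^{\alpha}}{\alpha!}\partial^{\alpha}K(x)=\sum_{|\alpha|=m+1}\frac{(-y)^{\alpha}}{\alpha!}\partial^{\alpha}K(x-\theta y)
\end{align*}
for some \(\theta\in(0,1)\). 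The coefficients here agree with those in \eqref{error_estimate}, since \(g^{(k)}(0)/k!=\sum_{|\alpha|=k}(-y)^{\alpha}\partial^{\alpha}K(x)/\alpha!\).

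Second, we bound the derivatives. By homogeneity, \(\partial^{\alpha}K\) is homogeneous of degree \(\lambda-n-|\alpha|\) and continuous on the unit sphere. Hence \(|\partial^{\alpha}K(z)|\le C_{\alpha}|z|^{\lambda-n-|\alpha|}\), and on the annulus above this is \(\lesssim R^{\lambda-n-m-1}\) with a constant depending only on \(n,\lambda,m\). Consequently the left side of \eqref{error_estimate} is \(\lesssim |y|^{m+1}R^{\lambda-n-m-1}\).

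Finally, write
\begin{align*}
|y|^{m+1}R^{\lambda-n-m-1}=|y|^aR^{\lambda-n-a}\left(\frac{|y|}{R}\right)^{m+1-a}.
\end{align*}
Since \(m+1=\ceil{a}\ge a\) and \(|y|/R<1/2\), the last factor is at most \(1\), which yields the claim.

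There is no genuine obstacle. The only points needing care are keeping the segment away from the singularity at \(0\), which is why \(|y|<R/2\) is assumed, and checking that the exponent \(m+1-a\) is nonnegative, which holds including when \(a\in\N\). The hypotheses on \(q\) and \(b\) play no role in this estimate.
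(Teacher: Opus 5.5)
Your proposal is correct and follows essentially the same route as the paper. Both arguments Taylor expand to order $m$ with the remainder evaluated at a point $z$ on the segment from $x$ to $x-y$, then use $|z|\gtrsim R$ and $|y|^{m+1-a}\le R^{m+1-a}$ (from $m+1\ge a$). Your explicit bound $|\partial^{\alpha}|z|^{\lambda-n}|\lesssim |z|^{\lambda-n-|\alpha|}$ via homogeneity spells out a step the paper leaves implicit in its ``naive estimate''.
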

\begin{proof}[Proof of Lemma \ref{Error_term_lemma}]\(\)\\
	This follows by Taylor expanding \(|x-y|^{\lambda-n}\), which immediately gives the naive estimate
	\begin{align*}
		\left||x-y|^{\lambda-n}-\sum_{|\alpha|\leq m}(-y)^{\alpha}\frac{\partial^{\alpha}|x|^{\lambda-n}}{\alpha!}\right|&\lesssim |y|^{m+1}|z|^{\lambda-n-m-1}
	\end{align*}
	for some \(z\) on the line segment between \(x\) and \(x-y\). We estimate the \(|z|\) term (which has a negative exponent) first: By using the fact that \(|x|>2|y|\) observe that any point on the line segment between \(x\) and \(x-y\) must satisfy both \(|y|<|z|\) and \(\frac{|x|}{2}\leq |z|\). Furthermore, since \(|x|\geq R\) it follows that \(|z|\gtrsim R\) and so 
	\eqref{error_estimate} would be implied by
	\begin{align*}
		|y|^{m+1}R^{\lambda-n-m-1}\lesssim |y|^aR^{\lambda-n-a}.
	\end{align*}
	This follows by using that \(|y|\lesssim R\), and \(a\leq m+1\) to deduce \(|y|^{m+1-a}\leq R^{m+1-a}\). Hence
	the desired estimate follows.\\
\end{proof}
The previous lemma provides an asymptotic formula for the kernel of \(\I\), and we use this to establish that if \(\I f\) satisfies a weighted \(L^q\) norm bound, then so does its approximation on large annuli:
\begin{lemma}\label{asymptotic_L^q}
	Let \(n\geq 2\), \(0<\lambda<n\). Suppose that \(a>0\), \(q\in[1,\frac{n}{n-\lambda}]\) and \(b\) are parameters satisfying the scaling condition \(\frac{n+b}{q}=n-\lambda+a\). Let \(f\in C^{\infty}_c(\R^n)\) satisfy the inequality
	\begin{align*}
		\norm{\I f}_{L^q(|x|^b, dx)} &\lesssim \int_{\R^n}|y|^a |f(y)|dy
	\end{align*}
	Set \(A_R:=B(0,2R)\setminus B(0,R)\), \(R>2\sup\{|y|:y\in \emph{supp}f\}\), \(M_{\alpha}:=\int_{\R^n}(-y)^{\alpha}f(y)dy\), and \(M_0:=\int_{\R^n}f(y)dy.\) Then one has
		\begin{align}
		\left|\left|\sum_{|\alpha|\leq m}\frac{\partial^{\alpha}}{\alpha!}|x|^{\lambda-n}M_{\alpha}\right|\right|_{L^q(|x|^b,A_R)} \ &\lesssim \int_{\R^n}|y|^a |f(y)|dy \label{asymptotic_L^q_bound}
	\end{align}
\end{lemma}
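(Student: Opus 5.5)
The idea is to write the polynomial approximation as \(\I f\) minus a Taylor remainder, then apply the triangle inequality in \(L^q(|x|^b,A_R)\). The \(\I f\) term is controlled by the assumed inequality. The remainder is controlled pointwise by Lemma \ref{Error_term_lemma}, and the scaling condition makes the resulting factor of \(R\) disappear.

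Fix \(x\in A_R\). Every \(y\in\operatorname{supp}f\) has \(|y|<R/2\), so Lemma \ref{Error_term_lemma} applies to each such \(y\). Interchanging the finite sum with the integral gives
\begin{align*}
\int_{\R^n}\sum_{|\alpha|\leq m}(-y)^{\alpha}\frac{\partial^{\alpha}|x|^{\lambda-n}}{\alpha!}f(y)\,dy=\sum_{|\alpha|\leq m}\frac{\partial^{\alpha}}{\alpha!}|x|^{\lambda-n}M_{\alpha}.
\end{align*}
Writing \(T(x)\) for this sum, we have
\begin{align*}
T(x)=\I f(x)-E(x),\qquad E(x):=\int_{\R^n}\Bigl(|x-y|^{\lambda-n}-\sum_{|\alpha|\leq m}(-y)^{\alpha}\frac{\partial^{\alpha}|x|^{\lambda-n}}{\alpha!}\Bigr)f(y)\,dy.
\end{align*}
Estimate \eqref{error_estimate} then yields, uniformly for \(x\in A_R\),
\begin{align*}
|E(x)|\lesssim R^{\lambda-n-a}\int_{\R^n}|y|^a|f(y)|\,dy .
\end{align*}
The implicit constant is independent of \(R\), since the lemma's constant depends only on \(n,\lambda,a\).

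By the triangle inequality (valid since \(q\geq1\)),
\begin{align*}
\|T\|_{L^q(|x|^b,A_R)}\le\|\I f\|_{L^q(|x|^b,\R^n)}+\|E\|_{L^q(|x|^b,A_R)}.
\end{align*}
The first term is bounded by \(\int|y|^a|f(y)|\,dy\) by hypothesis.

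For the second term, \(|x|^b\) is comparable to \(R^b\) on \(A_R\) whatever the sign of \(b\), and \(|A_R|\sim R^n\). Hence
\begin{align*}
\|E\|_{L^q(|x|^b,A_R)}\lesssim R^{\lambda-n-a}R^{\frac{n+b}{q}}\int|y|^a|f(y)|\,dy .
\end{align*}
The scaling condition \(\frac{n+b}{q}=n-\lambda+a\) makes the power of \(R\) exactly zero, which gives \eqref{asymptotic_L^q_bound} with a constant independent of \(R\).

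The only delicate point is that this uniformity in \(R\) is genuinely needed later, so one must check that no constant depends on \(R\) or on \(\operatorname{supp}f\) beyond the requirement \(R>2\sup\{|y|:y\in\operatorname{supp}f\}\). This holds because Lemma \ref{Error_term_lemma} is scale-invariant and the weight comparison on \(A_R\) uses only the fixed ratio \(2\) between the outer and inner radii.
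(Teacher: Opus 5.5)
Your proposal is correct and follows the paper's proof essentially step for step: you write the Taylor sum as \(\I f\) minus the remainder, control \(\I f\) by the hypothesis, and bound the remainder pointwise via Lemma \ref{Error_term_lemma}, after which the scaling condition cancels the powers of \(R\). The only cosmetic difference is that you use the pointwise bound on \(E\), which is uniform over \(A_R\), before integrating, whereas the paper phrases this step with Minkowski's integral inequality; here the two are equivalent.
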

\begin{proof}
	Again let \(m:=\ceil{a}-1\), note the immediate identity
	\begin{align}
		\sum_{|\alpha|\leq m}(-y)^{\alpha}\frac{\partial^{\alpha}}{\alpha!}|x|^{\lambda-n}f(y)&=\sum_{|\alpha|\leq m}(-y)^{\alpha}\frac{\partial^{\alpha}}{\alpha!}|x|^{\lambda-n}f(y)-|x-y|^{\lambda-n}f(y)+|x-y|^{\lambda-n}f(y) \label{splitting}
	\end{align}
	and if \(m=0\) we use the convention
	\begin{align*}
		\sum_{|\alpha|\leq 0}(-y)^{\alpha}\frac{\partial^{\alpha}}{\alpha!}|x|^{\lambda-n}&=|x|^{\lambda-n}.
	\end{align*}
	We use the following notation for simplicity
	\begin{align*}
		K(x,y)&
		:=|x-y|^{\lambda-n}-\sum_{|\alpha|\leq m}(-y)^{\alpha}\frac{\partial^{\alpha}}{\alpha!}|x|^{\lambda-n}
	\end{align*}
	and we also define
	\begin{align*}
		\mathcal{K}f(x)&:=\int_{\R^n}K(x,y)f(y)dy\\
	\end{align*}
	If we integrate both sides of \eqref{splitting} in \(y\), and then take weighted \(L^q\) norm of the resulting function of \(x\) on the annulus \(A_R:=B(0,2R)\setminus B(0,R)\), \(R>0\) we arrive at
	\begin{align}
		\left|\left|\sum_{|\alpha|\leq m}\frac{\partial^{\alpha}}{\alpha!}|x|^{\lambda-n}M_{\alpha}\right|\right|_{L^q(|x|^b,A_R)} \ &\leq \norm{\I f}_{L^q(|x|^b,A_R)}+\norm{\mathcal{K}f}_{L^q(|x|^b,A_R)}. \label{useful_bound}
	\end{align}
	The right hand side is controlled by \(\int_{\R^n}|y|^a|f(y)|dy\), first inequality \eqref{SW} would imply
	\begin{align*}
		\norm{\I f}_{L^q(|x|^b,A_R)}\lesssim \int_{\R^n}|y|^a|f(y)|dy
	\end{align*}
	and so we just need to verify
	\begin{align}
		\norm{\mathcal{K}f}_{L^q(|x|^b,A_R)}&\lesssim \int_{\R^n}|y|^a|f(y)|dy \label{key_error_term}.
	\end{align}
	This follows from Lemma \ref{Error_term_lemma} via the following argument: 
	Recall that \(x\in A_R\). Restricting \(R\) to \(\frac{R}{2}>\sup\{|y|:y\in \text{supp} f\}\), which is finite since \(f\) is compactly supported, we observe that every \(y\in \text{supp} f\) and \(x\in A_R\) meet the requirement of Lemma \ref{Error_term_lemma} and so we conclude
	\begin{align*}
		|K(x,y)|\lesssim |y|^aR^{\lambda-n-a}.
	\end{align*}
	Estimating \(\norm{\mathcal{K}f}_{L^q(|x|^b,A_R)}\) using Minkowski's integral inequality and the above bound, we conclude
	\begin{align*}
		\norm{\mathcal{K}f}_{L^q(|x|^b,A_R)}&\lesssim R^{\lambda-n-a}\int_{\R^n}|y|^a|f(y)|dy\left(\int_{A_R}|x|^bdx\right)^{\frac{1}{q}}\\
		&\lesssim R^{\lambda-n-a}R^{\frac{b+n}{q}}\int_{\R^n}|y|^a|f(y)|dy.
	\end{align*}
	Using the scaling condition, one has \(\frac{n+b}{q}=n-\lambda+a\) and so the \(R\) terms cancel which verifies \eqref{key_error_term} for all \(R\) sufficiently large.\\
\end{proof}
Our last useful lemma concerns the linear independence of the partial derivatives. Our previous lemmas combine to provide an estimate on the size of partial derivatives of \(|x|^{\lambda-n}\), and this size estimate is used in our later proof to show that if a certain homogeneous polynomial were nonzero, then it would grow too fast. In order to conclude that our desired homogeneous polynomial can only have zero as its coefficients we need to first verify that these functions are linearly independent, which is the content of our next lemma. This lemma is well known to experts, but we record it here for completeness:
\needspace{7\baselineskip}
\begin{lemma}\label{linear_independence}
	Suppose that \(0<\lambda<n\) and assume
	\begin{align}
		\sum_{|\alpha|=\ell}c_{\alpha}\partial^{\alpha}|x|^{\lambda-n}&=0\label{look_for_trivial}
	\end{align} 
	for all \(x\in\R^n\setminus \{0\}\). Then, unless \(\lambda\) is an even integer and \(\ell\geq\lambda\), one has 	
	\begin{align}
		c_{\alpha}&=0 \label{conc_2}
	\end{align}
	for all \(|\alpha|=\ell\). In the exceptional case where \(\lambda\) is an even integer, and \(\ell\geq \lambda\), there exists a homogeneous degree \(\ell-\lambda\) polynomial \(W\) such that
	\begin{align}
		\sum_{|\alpha|=\ell}c_{\alpha}\xi^{\alpha}&=W(\xi)|\xi|^{\lambda} \label{conc_1b}.
	\end{align}
\end{lemma}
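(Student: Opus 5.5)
We will prove this with the Fourier transform, reducing the question to a polynomial divisibility statement. Write \(P(\xi):=\sum_{|\alpha|=\ell}c_{\alpha}\xi^{\alpha}\). The function \(u(x):=\sum_{|\alpha|=\ell}c_{\alpha}\partial^{\alpha}|x|^{\lambda-n}\) is homogeneous of degree \(\lambda-n-\ell\) and smooth away from the origin.

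First we record what the hypothesis \eqref{look_for_trivial} says in distributional form. If \(u\) vanishes on \(\R^n\setminus\{0\}\), then the tempered distribution \(T:=P(\partial)|x|^{\lambda-n}\) (distributional derivatives of the locally integrable function \(|x|^{\lambda-n}\)) is supported at the origin. It is therefore a finite linear combination of derivatives of \(\delta_0\). We will check this identification of \(T\) with the pointwise derivative away from \(0\) by noting that the two differ only by a distribution supported at \(\{0\}\).

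Next we take Fourier transforms. We use \(\widehat{|x|^{\lambda-n}}=c_{n,\lambda}|\xi|^{-\lambda}\) with \(c_{n,\lambda}\neq 0\), valid since \(0<\lambda<n\). This gives
\begin{align*}
\widehat{T}(\xi)=c_{n,\lambda}\,(i)^{\ell}P(\xi)|\xi|^{-\lambda}.
\end{align*}
Because \(T\) is supported at \(0\), \(\widehat{T}\) is a polynomial \(Q\). Both sides are homogeneous of degree \(\ell-\lambda\), and \(\widehat{T}\) agrees with the function above on \(\xi\neq 0\). Hence
\begin{align*}
P(\xi)=C\,Q(\xi)|\xi|^{\lambda} \quad\text{on } \R^n\setminus\{0\},
\end{align*}
where \(Q\) is a homogeneous polynomial of degree \(\ell-\lambda\), or \(Q=0\).

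Now we split into cases according to \(\lambda\) and \(\ell\).
\begin{enumerate}[label=(\alph*)]
\item If \(\ell-\lambda\) is not a non-negative integer, then there is no nonzero homogeneous polynomial of that degree. So \(Q=0\), hence \(P=0\), and all \(c_{\alpha}=0\).
\item If \(\lambda\) is an integer but not even, and \(\ell\geq\lambda\), suppose \(Q\neq 0\). Then \(|\xi|^{\lambda}=P/(CQ)\) would be a rational function. Restricting to a line through a point where \(Q\neq0\) shows \(|t\xi_0+\eta|^{\lambda}\) would be rational in \(t\). This fails for odd \(\lambda\): for generic \(\eta\), \(|t\xi_0+\eta|^2\) is a quadratic in \(t\) with non-real roots, and its odd power under a square root has branch points. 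So \(Q=0\) and \(P=0\).
\item If \(\lambda\) is even and \(\ell\geq\lambda\), then \(|\xi|^{\lambda}\) is a polynomial. Setting \(W:=CQ\) gives exactly \eqref{conc_1b}.
\end{enumerate}

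The main obstacle is making the first step rigorous: justifying that \(T\) is supported at the origin, and that its Fourier transform is the polynomial matching \(P|\xi|^{-\lambda}\) away from zero. We handle this as follows. Pair \(T\) with \(\varphi\in C_c^\infty(\R^n\setminus\{0\})\) and integrate by parts, which is legitimate away from the singularity; this shows \(T=0\) there, so \(\operatorname{supp}T\subseteq\{0\}\). For the Fourier transform, we compute \(\widehat{T}\) paired with test functions supported away from \(\xi=0\), and combine this with the standard structure theorem for distributions supported at a point.
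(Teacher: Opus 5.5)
Your proposal is correct and follows essentially the same route as the paper. You show the distribution is supported at the origin, take the Fourier transform to get \(P(\xi)=C\,Q(\xi)|\xi|^{\lambda}\) with \(Q\) zero or homogeneous of degree \(\ell-\lambda\), and then split into cases.

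The only real difference is the odd-\(\lambda\) case. The paper just substitutes \(-\xi\): the degrees \(\ell\) and \(\ell-\lambda\) have opposite parity, which forces \(P=-P\). Your non-rationality argument also works once you make it precise. Square along a generic line, so that \(P^2=C^2Q^2q^{\lambda}\) there, where \(q(t)=|t\xi_0+\eta|^{2}\). Then compare multiplicities at a non-real root of \(q\): it is odd on the right and even on the left.
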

\begin{proof}\(\)\\
	The assumption \eqref{look_for_trivial} on \(\R^n\setminus\{0\}\) implies that the distribution on the left hand side is supported only at \(\{0\}\). Hence, (see Theorem 2.3.4 in \cite{hormander}) it is a finite linear combination (possibly zero) of derivatives of the delta distribution \(\delta_0\). Explicitly, for some \(k\in \N\) and constants \(\{w_{\beta}\}_{|\beta|\leq k}\) \eqref{look_for_trivial} is equivalent to
	\begin{align}
		\sum_{|\alpha|=\ell}c_{\alpha}\partial^{\alpha}|x|^{\lambda-n}&=\sum_{|\beta|\leq k}w_{\beta}\partial^{\beta}\delta_0(x) \label{look_for_trivial_2}
	\end{align}
	as distributions on \(\R^n\). It is easier to analyse the distributional Fourier transform of the above identity. Since \(0<\lambda<n\),  the distribution \(|x|^{\lambda-n}\) and all of its derivatives have a well defined (distributional) Fourier transform on \(\R^n\), which can be identified with the following function (see Lemma 2 in Section 1, Chapter 5 in \cite{stein70}):
	\begin{align*}
		\widehat{|\cdot|^{\lambda-n}}&=C_{n,\lambda}|\xi|^{-\lambda}
	\end{align*}
	for some dimensional constant \(C_{n,\lambda}\), which we may safely ignore. With this in mind, \eqref{look_for_trivial_2} becomes 
	\begin{align*}
		\sum_{|\alpha|=\ell}c_{\alpha}\xi^{\alpha}|\xi|^{-\lambda}&=\sum_{|\beta|\leq k}w_{\beta}\xi^{\beta}.
	\end{align*}
	After multiplying both sides by \(|\xi|^{\lambda}\), we arrive at the following equation
		\begin{align*}
		\sum_{|\alpha|=\ell}c_{\alpha}\xi^{\alpha}&=\sum_{|\beta|\leq k}w_{\beta}\xi^{\beta}|\xi|^{\lambda}.
	\end{align*}
	The left hand side is either zero, or homogeneous of degree \(\ell\), the right hand side must also be zero, or homogeneous of degree \(\ell\). This gives \(\ell\geq \lambda\) and
	\begin{align}
		\sum_{|\alpha|=\ell}c_{\alpha}\xi^{\alpha}&=\sum_{|\beta|= k}w_{\beta}\xi^{\beta}|\xi|^{\lambda} \label{W_defn}
	\end{align}
	where \(\ell=k+\lambda\). Furthermore, testing \(-\xi\) in the above equation reveals that \(\ell\) and \(k\) must have the same parity, or both sides are \(0\). Hence both sides are non-zero precisely when \(\ell\geq \lambda\) and \(\lambda\) is an even integer. In this exceptional case, \eqref{W_defn} is the desired conclusion.\\
	\\
	Suppose now that \(\ell<\lambda\) or \(\lambda\) is not an even integer. Then both sides of \eqref{W_defn} must be zero. By the linear independence of polynomials we are forced to conclude \(c_{\alpha}=0\) for all \(|\alpha|=\ell\), and so \eqref{conc_2} follows.
\end{proof}
\section{Proof of Proposition \ref{necessity_mean_0}}\label{proof_section_necessity}
Using the lemmas discussed in the previous section, we present the proof of Proposition \ref{necessity_mean_0}:
\begin{proof}[Proof of Proposition \ref{necessity_mean_0}]\(\)\\
	Assume for a contradiction that there exists a \(\gamma\) of minimal degree such that
	\begin{align*}
		M_{\gamma}:=\int_{\R^n}(-y)^{\gamma}f(y)dy
	\end{align*}
	is nonzero. That is, if there exists another \(\beta\) such that \(M_{\beta}\neq 0\) then
	\begin{align*}
		|\gamma|\leq |\beta|.
	\end{align*}
	Let \(\ell:=|\gamma|\), \(m=\ceil{a}-1\) and assume that \(\ell\leq m\) (otherwise there is nothing to prove). By assumption, since every multi-index of degree less than \(\ell\) is zero, we begin by observing that
	\begin{align*}
		\sum_{|\alpha|=\ell}\frac{\partial^{\alpha}}{\alpha!}|x|^{\lambda-n}M_{\alpha}&=\sum_{|\alpha|\leq m}\frac{\partial^{\alpha}}{\alpha!}|x|^{\lambda-n}M_{\alpha}-\sum_{\ell<|\alpha|\leq m}\frac{\partial^{\alpha}}{\alpha!}|x|^{\lambda-n}M_{\alpha}.
	\end{align*}
	Taking weighted \(L^q\) norms on \(A_R\) and applying Lemma \ref{asymptotic_L^q} to the first term on the right hand side, we conclude
	\begin{align}
			\left|\left|\sum_{|\alpha|=\ell}\frac{\partial^{\alpha}}{\alpha!}|x|^{\lambda-n}M_{\alpha}\right|\right|_{L^q(|x|^b, A_R)}&\lesssim \int_{\R^n}|y|^a|f(y)|dy+\sum_{\ell<|\alpha|\leq m}R^{\frac{n+b}{q}+\lambda-n-|\alpha|}|M_{\alpha}|. \label{intermediate_bound}
	\end{align}
	In order to leverage Lemma \ref{linear_independence}, set
	\begin{align*}
		Q(x):=\sum_{|\alpha|=\ell}\frac{\partial^{\alpha}}{\alpha!}|x|^{\lambda-n}M_{\alpha}.
	\end{align*}
	Since \(\partial^{\alpha}|x|^{\lambda-n}\) is a homogeneous degree \(\lambda-n-\ell\) function, so is \(Q\). With this in mind, 
	we compute the left hand side of \eqref{intermediate_bound} using spherical coordinates:
	\begin{align*}
		\left|\left|\sum_{|\alpha|=\ell}\frac{\partial^{\alpha}}{\alpha!}|x|^{\lambda-n}M_{\alpha}\right|\right|_{L^q(|x|^b, A_R)}^q
		&=\int_{R}^{2R}\int_{S^{n-1}}r^{b+n-1+q(\lambda-n-\ell)}|Q(\omega)|^qd\omega dr\\
		&\gtrsim R^{b+n+q(\lambda-n-\ell)}\int_{S^{n-1}}|Q(\omega)|^qd\omega.
	\end{align*}
	Inserting this lower bound into \eqref{intermediate_bound}, and using the scaling condition to simplify the powers of \(R\), we conclude
	\begin{align*}
		R^{a-\ell}\left(\int_{S^{n-1}}|Q(\omega)|^qd\omega\right)^{\frac{1}{q}}&\lesssim \int_{\R^n}|y|^a|f(y)|dy+\sum_{\ell<|\alpha|\leq m}R^{a-|\alpha|}|M_{\alpha}|.
	\end{align*}
	Note that the largest power of \(R\) appears on the left hand side, and hence for the inequality to be valid for all large \(R\) one must have
	\begin{align}
		\int_{S^{n-1}}|Q(\omega)|^qd\omega&=0. \label{vanished_poly}
	\end{align}
	This establishes that \(Q(x)=0\) on \(\R^n\setminus\{0\}\) and so Lemma \eqref{linear_independence} applies. We conclude that, unless \(\lambda\) is an even integer and \(\ell\geq \lambda\), we must have \(M_{\alpha}=0\) for all \(|\alpha|=\ell\). When \(\lambda\) is even but \(\ell<\lambda\) we still have \(M_{\alpha}=0\) for all \(|\alpha|=\ell\). Lastly, when both \(\lambda\) and \(\ell\geq \lambda\), 
	we conclude that there exists a homogeneous polynomial \(W\) of degree \(\ell-\lambda\) such that
	\begin{align*}
		\sum_{|\alpha|=\ell}\frac{M_{\alpha}}{\alpha!}\xi^{\alpha}&=W(\xi)|\xi|^{\lambda}.
	\end{align*} 
	Furthermore, the same conclusion can be obtained for order \(k\) moments for any \(\ell<k\leq m\) by modifying the previous argument in the following manner: Instead of requiring a non-zero minimal degree moment, we suppose for a contradiction that there is a minimal \(k\) satisfying \(\ell<k\leq m\) such that the rational function
	\begin{align*}
		\sum_{|\alpha|=k}\frac{M_{\alpha}}{\alpha!}\xi^{\alpha}|\xi|^{-\lambda}
	\end{align*}
	is not a homogeneous polynomial. We use the identity
	\begin{align}
		\sum_{|\alpha|=k}\frac{\partial^{\alpha}}{\alpha!}|x|^{\lambda-n}M_{\alpha}&=\sum_{|\alpha|\leq m}\frac{\partial^{\alpha}}{\alpha!}|x|^{\lambda-n}M_{\alpha}-\sum_{k<|\alpha|\leq m}\frac{\partial^{\alpha}}{\alpha!}|x|^{\lambda-n}M_{\alpha}-\sum_{\ell\leq|\alpha|< k}\frac{\partial^{\alpha}}{\alpha!}|x|^{\lambda-n}M_{\alpha} \label{identity_2}
	\end{align}
	and note that by the minimality on \(k\) assumption, there exists a polynomial \(R\) of degree strictly lower than \(k-\lambda\) (obtained by summing the homogeneous polynomials) such that
	\begin{align*}
		\sum_{\ell\leq|\alpha|< k}\frac{\partial^{\alpha}}{\alpha!}|x|^{\lambda-n}M_{\alpha}&=R(D)\Delta^{\frac{\lambda}{2}}|x|^{\lambda-n}.
	\end{align*}
	The distribution on the right hand side is \(R(D)\delta_0\), and this vanishes when tested against any function supported away from the origin. Hence we conclude
	\begin{align*}
		\left|\left|\sum_{\ell\leq|\alpha|< k}\frac{\partial^{\alpha}}{\alpha!}|x|^{\lambda-n}M_{\alpha}\right|\right|_{L^q(|x|^b, A_R)}&=0.
	\end{align*}
	For the remaining terms in \eqref{identity_2}, we repeat the arguments leading to \eqref{intermediate_bound} and we obtain
	\begin{align*}
			\left|\left|\sum_{|\alpha|=\ell}\frac{\partial^{\alpha}}{\alpha!}|x|^{\lambda-n}M_{\alpha}\right|\right|_{L^q(|x|^b, A_R)}&\lesssim \int_{\R^n}|y|^a|f(y)|dy+\sum_{k<|\alpha|\leq m}R^{\frac{n+b}{q}+\lambda-n-|\alpha|}|M_{\alpha}|.
	\end{align*}
	Following the same steps as before to invoke Lemma \ref{linear_independence}, we arrive at a contradiction. Hence for all \(\lambda\leq \ell \leq m\), the function \(\sum_{|\alpha|=\ell}\frac{M_{\alpha}}{\alpha!}\xi^{\alpha}|\xi|^{-\lambda}\) must be a homogeneous polynomial.
\end{proof}
\section{Sufficiency of the Polyharmonic Moment Assumption}\label{proof_suffice}
To complete the proof of Theorem \ref{newSW_even} we need to verify sufficiency (when \(a\notin\N\)) in the exceptional case where \(0<\lambda<n\) is an even integer and \(\floor{a}\geq\lambda\). The other cases are covered by the author's previous work (Theorem 1.1) \cite{mypreprint}. With this in mind, it remains to verify:
\begin{prop}\label{suff_prop}\(\)\\
	Let 
	\(n\geq 2\), \(0<\lambda<n\) and suppose that \(0<a\), \(q\in[1,\frac{n}{n-\lambda})\) and \(b\) are the exponents satisfying \(\frac{n+b}{q}=n-\lambda+a\). Further suppose \(a\notin\N\), if \(\lambda\) is an even integer and \(\floor{a}\geq \lambda\) then the inequality
	\begin{equation*}
		\left(\int_{\R^n}|x|^b|\mathcal{I}_{\lambda}f(x)|^qdx\right)^{\frac{1}{q}}\lesssim \int_{\R^n}|x|^a|f(x)|dx
	\end{equation*}
	holds, provided that \(f\in C^{\infty}_c(\R^n)\) satisfies the polyharmonic moment assumption:\newline \mbox{\(\int_{\R^n}h(x)f(x)dx=0\)} for all polyharmonic order \(\frac{\lambda}{2}\), homogeneous degree \(\ell\leq\floor{a}\) \mbox{polynomials \(h\)}.
\end{prop}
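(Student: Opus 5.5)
The strategy is to reduce Proposition \ref{suff_prop} to the generic case already handled in \cite{mypreprint} (Theorem 1.1 there, i.e. sufficiency in Theorem \ref{newSW}). The reduction replaces \(f\) by a modified function \(g\) with \(\I g=\I f\) away from a compact set and with full vanishing moments up to order \(\floor{a}\). The mechanism is the identity \(\Delta^{\frac{\lambda}{2}}|x|^{\lambda-n}=c\,\delta_0\), which holds because \(\lambda\) is an even integer and \(\lambda<n\).

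\textbf{Step 1 (factor the moment polynomials).} By Lemma \ref{equiv}, the polyharmonic moment assumption at each degree \(\lambda\le\ell\le\floor{a}\) gives homogeneous polynomials \(W_\ell\) of degree \(\ell-\lambda\) with
\begin{align*}
P_\ell(\xi):=\sum_{|\gamma|=\ell}\frac{M_\gamma}{\gamma!}\xi^\gamma=W_\ell(\xi)|\xi|^\lambda .
\end{align*}
For \(\ell<\lambda\), every homogeneous polynomial of degree \(\ell\) is polyharmonic of order \(\frac{\lambda}{2}\). The assumption therefore kills all moments of order \(\ell<\lambda\) outright.

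\textbf{Step 2 (build the correction).} Fix \(\phi\in C_c^\infty(B(0,1))\) with \(\int\phi=1\). Set
\begin{align*}
g:=f-\sum_{\lambda\le\ell\le\floor{a}}W_\ell(-D)\Delta^{\frac{\lambda}{2}}\phi ,
\end{align*}
choosing signs so that the Taylor moments of the subtracted term match those of \(f\). Here is why this works. The degree-\(\ell\) moment generating polynomial of \(\partial^\beta\phi\) is, up to lower-order terms coming from the moments of \(\phi\), equal to \(\xi^\beta\) with sign \((-1)^{|\beta|}\). One sees this by computing \(\widehat{\partial^\beta\phi}\) near \(0\). Lower-order contributions in the Taylor expansion of \(\widehat\phi\) then spill into higher degrees. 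I would handle this spillover by choosing \(\phi\) with \(\int\phi=1\) and all higher moments up to order \(\floor{a}\) equal to zero, which is always possible. With that choice, \(\widehat{g}\) vanishes to order \(\floor{a}+1\) at \(0\), so all moments of \(g\) of order \(\le\floor{a}\) vanish.

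Each correction term has the form \(\Delta^{\frac{\lambda}{2}}\psi\) with \(\psi\in C_c^\infty(B(0,1))\). Hence \(\I(\Delta^{\frac{\lambda}{2}}\psi)=c\,\psi\), which is compactly supported and smooth. Theorem 1.1 of \cite{mypreprint} applies to \(g\) and gives
\begin{align*}
\norm{\I g}_{L^q(|x|^b)}\lesssim\int|x|^a|g| .
\end{align*}
We then have \(\I f=\I g+c\sum_\ell W_\ell(-D)\phi\).

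\textbf{Step 3 (control the error terms by \(\int|x|^a|f|\)).} This is the main obstacle, because of scaling. The coefficients of \(W_\ell\) are linear in the moments \(M_\gamma\) with \(|\gamma|=\ell\). These satisfy only \(|M_\gamma|\le\int|y|^\ell|f|\), which is not bounded by \(\int|y|^a|f|\) uniformly. The fix is to dilate the bump \(\phi\) adapted to \(f\). Choose \(\phi_\rho(x)=\rho^{-n}\phi(x/\rho)\). A degree-\(\ell\) correction then has \(L^1(|x|^a)\) norm about \(|M|\rho^{a-\ell}\), and contributes \(\I\)-error of size \(|M|\rho^{-\ell+\frac{n+b}{q}-n+\lambda}=|M|\rho^{a-\ell}\) in \(L^q(|x|^b)\), consistent with scaling. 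Since \(a>\ell\), we need \(\rho\) with \(|M_\gamma|\rho^{a-\ell}\lesssim\int|y|^a|f|\).

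The tool I would try first is an interpolation-type bound on the moments: vanishing of lower moments lets one subtract Taylor polynomials. Failing that, I would use \(\rho\to0\) directly. As \(\rho\to0\), both error norms scale like \(\rho^{a-\ell}\to0\), because \(a-\ell>0\) exactly since \(a\notin\N\) and \(\ell\le\floor{a}\). So the correction terms become negligible. Meanwhile \(\int|x|^a|g_\rho|\to\int|x|^a|f|\), and \(\I g_\rho\to\I f\) in \(L^q(|x|^b)\) (or a.e., with Fatou). Taking the limit then yields inequality \eqref{SW} for \(f\) with the constant from \cite{mypreprint}, provided that constant does not depend on \(f\), which it does not.
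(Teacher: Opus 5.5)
Your proposal is correct, but it takes a different route from the paper. The paper does not use the vanishing-moment theorem of \cite{mypreprint} as a black box. Instead it applies Lemma \ref{technical_reduction} with \(G=|x|^{\lambda-n}\) and handles \(J-\tilde J\) (supported where \(4|y|>|x|\)) by Minkowski's inequality and local integrability of \(|x|^{q(a-\ell)-n}\). It then uses Lemma \ref{equiv} to write the Taylor term as \(\tilde J=R(D)\Delta^{\lambda/2}|x|^{\lambda-n}\), which vanishes identically on \(\R^n\setminus\{0\}\). So the paper uses \(\Delta^{\lambda/2}|x|^{\lambda-n}=c\,\delta_0\) on the kernel side, whereas you use it on the input side, as \(\I(\Delta^{\lambda/2}\psi)=c\,\psi\) for \(\psi\in C^\infty_c(\R^n)\).

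Your steps check out. Take the paper's convention \(M_\gamma=\int(-x)^\gamma f\), and let \(\phi\) have unit mass and vanishing moments of orders \(1,\dots,\floor{a}\). Then for \(Q\) homogeneous of degree \(\ell\), the degree-\(\ell\) moment polynomial \(\sum_{|\gamma|=\ell}\frac{M_\gamma}{\gamma!}\xi^\gamma\) of \(Q(\partial)\phi\) is exactly \(Q(\xi)\), and all its other moments of order \(\le\floor{a}\) vanish. Consequences:
\begin{itemize}
\item Subtracting \(\sum_{\lambda\le\ell\le\floor{a}}W_\ell(\partial)\Delta^{\lambda/2}\phi_\rho\) kills every moment of order \(\le\floor{a}\); no sign flip is needed.
\item The potential changes only by \(c\sum_\ell W_\ell(\partial)\phi_\rho\).
\item Both error terms are \(O(\rho^{a-\ell})\), which tends to \(0\) because \(\ell\le\floor{a}<a\).
\end{itemize}
Letting \(\rho\to0\) for fixed \(f\) is legitimate, since the black-box constant does not depend on the input. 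The interpolation-type bound you mention first is therefore unnecessary.

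What your approach buys is modularity and transparency. It needs only the statement of the vanishing-moment result plus the fundamental-solution identity. It also shows why polyharmonic moments are invisible to \(\I\), and exactly where \(a\notin\N\) enters. The paper's approach is shorter and needs no limiting argument once Lemma \ref{technical_reduction} is available.

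One point to make explicit: you apply the black box to \(g\) with \(\lambda\) even and \(\floor{a}\ge\lambda\), a case excluded from Theorem \ref{newSW} as stated here. You should cite the vanishing-moment sufficiency in a form valid for every \(\lambda\), not as ``sufficiency in Theorem \ref{newSW}''. It does hold in that generality: when all moments up to order \(\floor{a}\) vanish, the term \(\tilde J\) in the paper's argument is identically zero for any \(\lambda\).
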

\subsection{Technical Reductions}
	It is convenient to recall a technical reduction (Lemma 3.1 from \cite{mypreprint}). We recall a simplified version suitable for our purposes (but omit the proof):
	\needspace{8\baselineskip}
\begin{lemma}\label{technical_reduction}
	Let 
	\(n\geq 2\), \(0<\lambda<n\) and suppose that \(a\geq 0\), \(b\in \R\), and \(q\in[1,\frac{n}{n-\lambda})\) satisfy \(\frac{n+b}{q}=n-\lambda+a\). Let \(m=\floor{a}\) denote the floor of \(a\). If \(\rho\in L^{\infty}(\R)\) is identically \(1\) on \(\left[0,\frac{1}{2r}\right]\) and compactly supported in \(\left[0,\frac{1}{r}\right]\) for some \(r>1\), and if \(G\in C^{m+1}(\R^n\setminus\{0\})\) is positively homogeneous of degree \(\lambda-n\), define 
	\begin{align*}
		H(x,y)&:=\rho\left(\frac{|y|}{|x|}\right)\sum_{|\alpha|\leq m}(-y)^{\alpha}\frac{\partial_x^{\alpha}G(x)}{\alpha!}.
	\end{align*}
	Then the inequality
	\begin{align*}
		\left(\int_{\R^n}|x|^b \left|G*f(x)\right|^qdx\right)^{\frac{1}{q}}&\lesssim \int_{\R^n}|x|^a|f(x)|dx+\left(\int_{\R^n}|x|^{b}\left|\int_{\R^n}H(x,y)f(y)dy\right|^q dx\right)^{\frac{1}{q}}
	\end{align*}
	holds for all \(f\in C^{\infty}_c(\R^n)\) with implied constant independent of \(f\) (here \(*\) denotes convolution).
\end{lemma}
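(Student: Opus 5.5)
The plan is to treat the Taylor-corrected kernel \(G(x-y)-H(x,y)\) as an error kernel. I will show that it obeys a pointwise bound good enough for Minkowski's integral inequality to close, exactly as in the proof of Lemma \ref{asymptotic_L^q}. Writing \(G*f(x)=\int (G(x-y)-H(x,y))f(y)\,dy+\int H(x,y)f(y)\,dy\), the triangle inequality in \(L^q(|x|^b dx)\) reduces the lemma to
\begin{align*}
\left(\int_{\R^n}|x|^b\left|\int_{\R^n}\bigl(G(x-y)-H(x,y)\bigr)f(y)\,dy\right|^qdx\right)^{\frac1q}\lesssim\int_{\R^n}|y|^a|f(y)|\,dy .
\end{align*}
Since \(q\geq1\), Minkowski's integral inequality bounds the left side by \(\int |f(y)|\,\Phi(y)\,dy\), where \(\Phi(y):=\norm{G(\cdot-y)-H(\cdot,y)}_{L^q(|x|^bdx)}\). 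So it suffices to prove \(\Phi(y)\lesssim|y|^a\) for every \(y\neq0\). By the homogeneity of \(G\) and the scaling relation \(\frac{n+b}{q}=n-\lambda+a\), \(\Phi\) is homogeneous of degree \(a\) in \(y\). Still, I would verify the bound directly by splitting the \(x\)-integral into three regions.

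\emph{Near-field region \(|x|\geq 2r|y|\).} Here \(\rho(|y|/|x|)=1\), so \(G(x-y)-H(x,y)\) is exactly the order-\(m\) Taylor remainder of \(G\) at \(x\). As in Lemma \ref{Error_term_lemma} (using \(G\in C^{m+1}(\R^n\setminus\{0\})\) and the homogeneity of \(\partial^{\beta}G\) for \(|\beta|=m+1\), with the segment from \(x\) to \(x-y\) staying at distance \(\gtrsim|x|\) from the origin), it is bounded by \(|y|^{m+1}|x|^{\lambda-n-m-1}\). I deliberately keep the exponent \(m+1\) rather than reducing it to \(a\). The reason is that \(|x|^{b+q(\lambda-n-a)}=|x|^{-n}\) by scaling, which would produce a logarithmic divergence at infinity. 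With \(m+1>a\) the weight becomes \(|x|^{-n-q(m+1-a)}\), which is integrable on \(|x|\geq 2r|y|\). This gives \(|y|^{q(m+1)}|y|^{-q(m+1-a)}=|y|^{qa}\). I expect this to be the main point of care: one must avoid the endpoint logarithm by keeping the full Taylor order.

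\emph{Transition region \(r|y|\leq|x|\leq2r|y|\).} Here \(\rho\) is merely bounded, and \(|x|\sim|y|\). Each Taylor term is bounded by \(|y|^{|\alpha|}|x|^{\lambda-n-|\alpha|}\sim|y|^{\lambda-n}\), and its \(L^q(|x|^b)\) norm over this annulus is \(\lesssim |y|^{\lambda-n+\frac{n+b}{q}}=|y|^a\). The term \(G(x-y)\) will be handled together with the next region.

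\emph{Far-field region \(|x|\leq 2r|y|\).} Since \(\rho\) vanishes on \([\frac1r,\infty)\), we have \(H=0\) when \(|x|<r|y|\). So the only issue is \(|G(x-y)|\lesssim|x-y|^{\lambda-n}\) on the ball \(|x|\leq2r|y|\). I need two local integrability facts here. At \(x=y\), \(|x-y|^{q(\lambda-n)}\) is integrable precisely because \(q<\frac{n}{n-\lambda}\). At \(x=0\), \(|x|^b\) is integrable because \(b=q(n-\lambda+a)-n>-n\). Near \(x=y\) the weight \(|x|^b\) is comparable to \(|y|^b\), while near \(0\) the kernel is comparable to \(|y|^{\lambda-n}\). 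Scaling then gives \(\int_{|x|\leq2r|y|}|x|^b|x-y|^{q(\lambda-n)}dx\lesssim|y|^{b+n+q(\lambda-n)}=|y|^{qa}\).

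Summing the three contributions gives \(\Phi(y)\lesssim|y|^a\). Minkowski's inequality then yields the displayed estimate, and hence the lemma, with a constant depending only on \(n,\lambda,a,q,r,\norm{\rho}_{\infty}\) and \(G\).
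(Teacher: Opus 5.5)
Your proof is correct. The paper does not prove this lemma itself: it quotes it in simplified form from Lemma 3.1 of \cite{mypreprint} and omits the proof, so there is no in-paper argument to compare against.

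Your argument is the natural global version of the Taylor/Minkowski/scaling computations the paper does carry out, namely the error term in Lemma \ref{asymptotic_L^q} and the \(J-\tilde{J}\) estimate in the proof of Proposition \ref{suff_prop}. You also correctly isolate the one place where copying those computations directly would break. The bound \(|y|^a|x|^{\lambda-n-a}\) of Lemma \ref{Error_term_lemma} is adequate only on a single dyadic annulus. Over all of \(\{|x|\geq 2r|y|\}\) it produces the weight \(|x|^{-n}\) and a logarithmic divergence. Keeping the full order \(m+1>a\) instead gives a convergent integral; this inequality is strict because \(m=\floor{a}\), including when \(a\in\N\) or \(a=0\). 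The hypotheses \(q<\frac{n}{n-\lambda}\) and \(b>-n\) enter exactly where you use them, in the ball \(\{|x|\leq 2r|y|\}\).

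Two cosmetic points. First, your labels `near-field' and `far-field' are reversed relative to the usual convention. Second, in your splitting the hypothesis \(r>1\) is used only to guarantee \(|x|\geq 2|y|\) where \(\rho=1\). The equivalent grouping
\[
G(x-y)-H(x,y)=\Bigl(1-\rho\bigl(\tfrac{|y|}{|x|}\bigr)\Bigr)G(x-y)+\rho\bigl(\tfrac{|y|}{|x|}\bigr)\Bigl(G(x-y)-\sum_{|\alpha|\leq m}(-y)^{\alpha}\tfrac{\partial^{\alpha}G(x)}{\alpha!}\Bigr)
\]
makes the role of \(r>1\) more visible. With this grouping the Taylor remainder estimate is needed on \(\{|x|\geq r|y|\}\), where \(|x-ty|\geq(1-\frac{1}{r})|x|\). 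It leads to essentially the same estimates: your separate bound for \(H\) on the transition annulus gets absorbed into the remainder bound.
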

With this lemma in mind, the remaining sufficiency part of Theorem \ref{newSW_even} can be proven in a similar spirit as \cite{mypreprint}, which we provide below.
\subsection{Proof of Proposition \ref{suff_prop}}
\begin{proof}[Proof of Proposition \ref{suff_prop}]
To simplify notation we set \(\rho=\chi_{[0,\frac{1}{4}]}\). Let \(m:=\floor{a}\) and write
\begin{align*}
	H(x,y)&=\rho\left(\frac{|y|}{|x|}\right)\sum_{|\alpha|\leq m}(-y)^{\alpha}\frac{\partial_x^{\alpha}(|x|^{\lambda-n})}{\alpha!}.
\end{align*}
As before, when \(m=0\) we mean \(\sum_{|\alpha|\leq m}(-y)^{\alpha}\frac{\partial_x^{\alpha}(|x|^{\lambda-n})}{\alpha!}=|x|^{\lambda-n}\). By Lemma \ref{technical_reduction} (with \(G(x)=|x|^{\lambda-n}\)) it suffices to check
\begin{align}
	\left(\int_{\R^n}|x|^{b}\left|\int_{\R^n}H(x,y)f(y)dy\right|^q dx\right)^{\frac{1}{q}}&\lesssim \int_{\R^n}|y|^a|f(y)|dy \label{Hbound}.
\end{align}
To simplify notation we write
\begin{align*}
	J(|x|)&:= \int_{\R^n}H(x,y)f(y)dy\\
	\tilde{J}(|x|)&:= \int_{\R^n}\sum_{|\alpha|\leq m}(-y)^{\alpha}\frac{\partial_x^{\alpha}(|x|^{\lambda-n})}{\alpha!}f(y)dy
\end{align*}
and we divide our analysis into two terms, \(|J-\tilde{J}|\), and \(|\tilde{J}|\). For the difference term, we observe
\begin{align*}
	|J(|x|)-\tilde{J}(|x|)|&=\left|\int_{\R^n}\left(1-\rho\left(\frac{|y|}{|x|}\right)\right)\sum_{|\alpha|\leq m}(-y)^{\alpha}\frac{\partial_x^{\alpha}(|x|^{\lambda-n})}{\alpha!}f(y)dy\right|.
\end{align*}
The support of \(1-\rho\left(\frac{|y|}{|x|}\right)\) is the region \(4|y|>|x|\), and combining the above identity for \(|J-\tilde{J}|\) with Minkowski's integral inequality (and triangle inequality) gives,
\begin{align*}
	\left(\int_{\R^n}|x|^{b}|J(|x|)-\tilde{J}(|x|)|^{q}dx\right)^{\frac{1}{q}}&\lesssim\sum_{\ell\leq m} \int_{\R^n}|f(y)||y|^{\ell}\left(\int_{\{y\in\R^n:4|y|>|x|\}}|x|^{b+q(\lambda-n-\ell)}dx\right)^{\frac{1}{q}}dy.
\end{align*}
The integral in \(x\) on the RHS is bounded by a constant multiple of \(|y|^{a-\ell}\), to see this use the scaling condition to replace \(b\) in the exponent of \(|x|\) and realise that \(|x|^{q(a-\ell)-n}\) is locally integrable because \(a>\ell\). This means \eqref{Hbound} reduces to analysing the subtracted quantity \(\tilde{J}\). First, we realise that the polyharmonic moment assumption provides (via Lemma \ref{equiv})
a polynomial \(R\) (obtained by adding the homogeneous polynomials) which satisfies
\begin{align*}
	\int_{\R^n}\sum_{|\alpha|\leq m}(-y)^{\alpha}\frac{\partial_x^{\alpha}(|x|^{\lambda-n})}{\alpha!}f(y)dy&=R(D)\Delta^{\frac{\lambda}{2}}|x|^{\lambda-n}.
\end{align*}
When viewed as distributions, the term on the right is \(R(D)\delta_0(x)\). Using the scaling condition, for any \(0\leq \ell\leq m\) we deduce that \(\frac{b}{q}-\ell+\lambda=a-\ell+n-\frac{n}{q}\) is non-negative, since \(a>\ell\) and \(1\leq q\). This implies that
\begin{align*}
	|x|^{\frac{b}{q}}R(D)\delta_0(x)&=0
\end{align*}
in the distributional sense. Hence, interpreting the \(L^q\) norm using duality, we conclude
\begin{align*}
	\norm{|\cdot|^{\frac{b}{q}} \ R(D)\delta_0}_{L^q}&=
	0.
\end{align*}
\end{proof}
\section{AI Usage}
The author used AI tools to check an earlier version of this work. This check highlighted a subtlety missed by the author concerning the case when \(\lambda\) is an even integer in Lemma \ref{linear_independence}. The author benefited from insights obtained from digesting this discussion, and has modified their proof accordingly. No unverified output has been retained and the author accepts full responsibility for any further errors.
\section{Acknowledgements}
The author appreciates the encouragement and feedback from their thesis adviser Po-Lam Yung. They also appreciate Zane Li's friendly help and encouragement. The author also benefitted from discussions about polynomials with Snehinh Sen. Part of this project was completed while the author was visiting the National University of Singapore and the author wishes to thank Seng Kee Chua for his hospitality. The author was partially supported by Po-Lam Yung and Neil Trudinger's Discovery Project DP250103744 from the Australian Research Council. 

\end{document}